	\theoremstyle{plain}
	\newtheorem{theorem}{Theorem}[section]
	\newtheorem{lemma}[theorem]{Lemma}
	\newtheorem{corollary}[theorem]{Corollary}
	\theoremstyle{definition}
	\newtheorem{definition}[theorem]{Definition}
	\theoremstyle{remark}
	\newtheorem{remark}[theorem]{Remark}
	\theoremstyle{example}
	\newtheorem{example}[theorem]{Example}
	\theoremstyle{conjecture}
	\DeclareMathOperator{\tr}{tr}
    \newcommand{\0}{\mathbf{0}}
\begin{document}

\title[The Square of Adjacency Matrices]
{The Square of Adjacency Matrices}
\author{Daniel Joseph Kranda}
\date{\today}
\maketitle

\begin{abstract}

It can be shown that any symmetric $(0,1)$-matrix $A$ with $\tr A = 0$ 
can be interpreted as the adjacency matrix of
a simple, finite graph. The square of an adjacency matrix $A^2=(s_{ij})$ 
has the property that $s_{ij}$ represents the number
of walks of length two from vertex $i$ to vertex $j$.
With this information, the motivating question behind this paper
was to determine what conditions on a matrix $S$ are needed to have
$S=A(G)^2$ for some graph $G$. Structural results imposed by the
matrix $S$ include detecting bipartiteness or connectedness and 
counting four cycles. Row and column sums are examined as well as
the problem of multiple nonisomorphic graphs with the same adjacency
matrix squared.

\end{abstract}

\section{Introduction and Background}

This paper aims to determine properties of graphs found by examining the square of the adjacency matrix and vice versa.
As a thorough study of the square of the adjacency matrix has not been adressed
previously in the literature, we survey several results covering different aspects of the given problem.

Throughout this paper, we will consider only simple, undirected graphs.

\begin{theorem}
Let $A=(a_{ij})=A(G)$ for some simple undirected graph $G$ and define $S=(s_{ij})=A^2$. 
Then for every $i$ and $j$, $s_{ij}$ represents the number of
two-walks (walks with two edges) from vertex $v_i$ to $v_j$ in $G$.
\end{theorem}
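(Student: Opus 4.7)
The plan is to unpack the definition of matrix multiplication and observe that each nonzero summand corresponds to exactly one length-two walk. First I would write the $(i,j)$ entry of $S = A^2$ as
\[
s_{ij} \;=\; \sum_{k=1}^{n} a_{ik}\,a_{kj},
\]
where $n$ is the order of $G$. Because $A$ is a $(0,1)$-matrix, each product $a_{ik}a_{kj}$ is either $0$ or $1$, and it equals $1$ precisely when both $v_iv_k$ and $v_kv_j$ are edges of $G$, i.e., when $v_k$ is a common neighbor witness for a two-walk from $v_i$ to $v_j$.

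Next I would invoke the definition of a walk of length two: such a walk from $v_i$ to $v_j$ is a sequence $v_i, v_k, v_j$ in which $v_iv_k$ and $v_kv_j$ are edges. This sets up a bijection between the set of two-walks from $v_i$ to $v_j$ and the set of indices $k \in \{1, \dots, n\}$ for which $a_{ik} = a_{kj} = 1$. Counting the latter set by summing its indicator function is exactly the expression for $s_{ij}$ above, which establishes the claim.

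The only real subtlety, and thus the main obstacle to address cleanly, is the diagonal case $i = j$. Here a ``walk'' $v_i, v_k, v_i$ traverses the edge $v_iv_k$ twice; I would note explicitly that our convention permits walks to repeat vertices and edges, so every neighbor $v_k$ of $v_i$ contributes one closed two-walk. Because $\tr A = 0$ rules out loops, the index $k = i$ itself never contributes (since $a_{ii} = 0$), and the bijective argument from the previous paragraph goes through verbatim, yielding $s_{ii} = \deg(v_i)$ as a corollary. No further case analysis is required.
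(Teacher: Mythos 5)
Your proof is correct and follows essentially the same route as the paper's: expand $s_{ij}=\sum_{k}a_{ik}a_{kj}$ and note that each unit summand corresponds to the two-walk $v_i v_k v_j$. Your extra remarks on the diagonal case $i=j$ and the no-loops convention are a welcome bit of added care but do not change the argument.
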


\begin{proof}
Consider the entry $s_{ij}$ in $S$. By definition, $s_{ij}=\sum_{k=1}^{n}{a_{ik}a_{kj}}$ and so one is contributed to the sum only when
$a_{ik}$ and $a_{kj}$ are $1$. That is, when the edges $v_iv_k$ and $v_kv_j$ are in $G$, 
which corresponds to the two-walk from $v_i$ to $v_j$ through $v_k$.
\end{proof}

\begin{definition}
A matrix $S$ is \emph{square graphic} if there is a simple, undirected graph $G$ such that $S=A(G)^2$.
\end{definition}

\begin{definition}\label{square similar def}
We will say $S_1$ and $S_2$ are \emph{similar} if there is a permutation matrix $P$ 
such that $S_2=P^{-1}S_1P$. In this case, we write $S_1\sim S_2$.
\end{definition}

\begin{theorem}\label{square similar thm}
If $S_1$ is square graphic, then so is $S_2=P^{-1}S_1P$ for any permutation matrix $P$.
\end{theorem}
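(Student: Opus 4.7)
The plan is to exhibit an explicit graph $H$ whose adjacency matrix squared equals $S_2$. The natural candidate is the graph obtained from $G$ by relabeling its vertices according to the permutation encoded by $P$.

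First I would set $A = A(G)$, so that $S_1 = A^2$ by hypothesis, and define $B = P^{-1}AP$. The first step is to verify that $B$ is itself the adjacency matrix of a simple undirected graph $H$. Since $P$ is a permutation matrix we have $P^{-1} = P^{T}$, so $B^{T} = P^{T}A^{T}(P^{-1})^{T} = P^{-1}AP = B$, i.e.\ $B$ is symmetric. Moreover, conjugation by a permutation matrix merely permutes the rows and columns of $A$ simultaneously, so $B$ remains a $(0,1)$-matrix, and the diagonal entries of $B$ are a permutation of those of $A$, hence all zero. By the observation in the abstract, $B$ is therefore the adjacency matrix of some simple graph $H$ (in fact $H \cong G$, with vertices relabeled by the permutation).

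Next I would compute $B^{2}$ directly:
\[
B^{2} = (P^{-1}AP)(P^{-1}AP) = P^{-1}A(PP^{-1})AP = P^{-1}A^{2}P = P^{-1}S_1 P = S_2.
\]
Thus $S_2 = A(H)^{2}$, so $S_2$ is square graphic by definition.

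I do not anticipate a genuine obstacle here; the whole argument rests on the single fact that $P^{-1} = P^{T}$ for a permutation matrix, which simultaneously ensures that conjugation preserves symmetry, the $(0,1)$ pattern, and the zero diagonal, and that conjugation commutes with squaring. If anything warrants care, it is simply spelling out why $P^{-1}AP$ is a legitimate adjacency matrix rather than merely a symmetric matrix; the rest is a two-line calculation.
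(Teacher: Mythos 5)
Your proof is correct and is exactly the standard argument the paper intends (the paper in fact omits a proof of this theorem): conjugating $A(G)$ by $P$ yields the adjacency matrix of the relabeled graph, and conjugation commutes with squaring. No gaps; your extra care in checking that $P^{-1}AP$ is a genuine adjacency matrix is a welcome addition.
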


\begin{remark} If $S_1\sim S_2$ then $S_1$ is square graphic if and only if $S_2$ is square graphic. \end{remark}

The following results relate the structure of the square of the adjacency matrix of a graph with the structure of that graph.
We begin by determining if a graph is bipartite or disconnected by examining the adjacency matrix squared.

\begin{theorem}\label{bip or disc}
Suppose $S$ is an $n\times n$ matrix such that $S=A(G)^2$. Then $G$ is bipartite or disconnected if and only if
$S\sim\left(\begin{array}{cc} B_1 & \0 \\ \0 & B_2 \end{array}\right)$ where $B_1$ is a $k\times k$ matrix with $0<k<n$ and
$\0$ represents a matrix of all zeros of the appropriate size.
\end{theorem}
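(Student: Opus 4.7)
\medskip

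\noindent\textbf{Proof proposal.} The plan is to prove the two directions separately, using the two-walk interpretation from the first theorem as the main tool. The block structure of $S$ says precisely that, after relabeling vertices so that $V_1=\{v_1,\ldots,v_k\}$ and $V_2=\{v_{k+1},\ldots,v_n\}$, every $s_{ij}$ with $i\in V_1,\,j\in V_2$ vanishes, i.e.\ \emph{no vertex in $V_1$ has a two-walk to any vertex in $V_2$}. I will translate this walk condition into a neighborhood condition and then leverage it.

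For the forward direction, suppose $G$ is disconnected. Then the vertex set splits into two nonempty unions of components $V_1,V_2$ with no edges between them; reordering vertices accordingly makes $A(G)$ block diagonal, and therefore so is $S=A(G)^2$. If instead $G$ is bipartite with parts $X,Y$, then any two-walk $v_i\to v_k\to v_j$ uses one vertex from each part in the middle, forcing $v_i$ and $v_j$ into the same part. Listing the vertices of $X$ first gives $s_{ij}=0$ whenever $i\in X,\,j\in Y$, which is exactly the desired block form; Theorem~\ref{square similar thm} handles the conjugation by the appropriate permutation matrix.

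The harder direction is the converse, and this is where I expect the real work. Assume $S$ has the stated block form with $V_1,V_2$ both nonempty, and assume $G$ is connected; the goal is then to conclude that $G$ is bipartite with bipartition $(V_1,V_2)$. The key observation is that the vanishing of $s_{ij}$ for $i\in V_1,\,j\in V_2$ means no common neighbor, which is equivalent to saying: \emph{no vertex $w$ has neighbors in both $V_1$ and $V_2$}. Thus for every $w$, either $N(w)\subseteq V_1$ or $N(w)\subseteq V_2$ (or $N(w)=\emptyset$).

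Now I would argue by propagation. Suppose some $w\in V_1$ satisfies $N(w)\subseteq V_1$, and pick any neighbor $w'\in V_1$ of $w$. Then $w\in N(w')$ shows $w'$ has a $V_1$-neighbor, so by the observation $N(w')\subseteq V_1$ as well. Iterating this along any walk starting at $w$ keeps us inside $V_1$, so the connected component of $w$ is contained in $V_1$; connectedness together with $V_2\neq\emptyset$ rules this out. The same argument rules out any $w\in V_2$ with $N(w)\subseteq V_2$. Hence every non-isolated vertex of $V_1$ has all neighbors in $V_2$ and vice versa, so every edge of $G$ crosses the partition $(V_1,V_2)$; isolated vertices can be placed in either side, and $G$ is bipartite. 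The main obstacle is being careful with isolated vertices and with the case in which $V_1$ or $V_2$ carries no edges, but these fit naturally into either the bipartite or the disconnected conclusion.
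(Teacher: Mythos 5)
Your proof is correct, and the converse direction takes a genuinely different route from the paper's. Both arguments rest on the same translation of the block form --- $s_{ij}=0$ for $i\in V_1$, $j\in V_2$ means no vertex of $V_1$ shares a neighbor with any vertex of $V_2$ --- but you reorganize this as a dichotomy on neighborhoods (every vertex has all its neighbors in $V_1$ or all in $V_2$) and propagate one edge at a time: either one side swallows the entire connected graph, contradicting $V_2\neq\emptyset$, or every edge crosses the partition, so $(V_1,V_2)$ is itself a bipartition. The paper instead argues by contradiction from nonbipartiteness: it locates an odd cycle, shows that membership in $V_1$ propagates two steps at a time around the cycle (so oddness forces the whole cycle into $V_1$), and then pulls the remaining vertices into $V_1$ along paths from the cycle, with a case split on the parity of the path length. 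Your version is shorter, avoids the parity bookkeeping, and has the added benefit of exhibiting the bipartition explicitly as the index sets of the two blocks; the paper's version makes visible the role of the odd cycle as the standard witness of nonbipartiteness. One small tidying point: since $0<k<n$ forces $n\ge 2$, a connected $G$ has no isolated vertices, so the caveat about $N(w)=\emptyset$ never actually arises in the converse direction.
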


\begin{proof}
First suppose that $G$ is disconnected with a connected component $H$ on $k$ vertices with $0<k<n$. Then there are no two-walks from
any vertex of $H$ to any vertex of $G\setminus H$. Therefore, renumbering the vertices of $G$ if necessary, we have
\[A(G)^2\sim\left(\begin{array}{cc} A(H)^2 & \0 \\ \0 & A(G\setminus H)^2 \end{array}\right).\]
It is clear $A(G)^2$ has the desired form.

Now, suppose $G$ is bipartite with partite sets $X$ and $Y$ and that $A(G)^2=(s_{ij})$.
Without loss of generality, we have $X=\{v_1,v_2,\ldots,v_k\}$ (otherwise, relabel the graph accordingly).
Since $G$ is bipartite, every two-walk must begin and end in the same partite set.
Hence, $s_{ij}=s_{ji}=0$ for all $i=1,2,\ldots,k$ and $j=k+1,\ldots, n$. Therefore,
\[A(G)^2=\left(\begin{array}{cc} B_1 & \0 \\ \0 & B_2 \end{array}\right)\]
where $B_1$ is $k\times k$ with $0<k<n$.

To prove the sufficiency of the statement, assume by the contrapositive that $G$ is connected and nonbipartite. 
By contradiction, assume
\[A(G)^2=(s_{ij})=\left(\begin{array}{cc} B_1 & \0 \\ \0 & B_2 \end{array}\right)\]
where $B_1$ is $k\times k$ with $0<k<n$.
Let $V_1=\{v_1,v_2,\ldots, v_k\}$ and $V_2=\{v_{k+1},\ldots, v_n\}$. Under these assumptions,
we must have $s_{ij}=s_{ji}=0$ for all $i=1,\ldots, k$ and $j=k+1,\ldots, n$.

Since $G$ is nonbipartite, there is an odd cycle $C$ in $G$ of length $t$. Without loss of generality, there is a $v\in V(C)\cap V_1$.
We now claim that $V(C)\subseteq V_1$.

Write $C= v_{c_0}v_{c_1}\ldots v_{c_{t-1}} v_{c_0}$ where the indices are $c_i$ with $i \mod t$. Then without loss of generality,
$v=v_{c_0}\in V_1$. Notice that  we must have $v_{c_{i+2}}\in V_1$ whenever $v_{c_i}\in V_1$. Otherwise, if $v_{c_i}\in V_1$ and
$v_{c_{i+2}}\in V_2$ then $s_{c_ic_{i+2}}\neq 0$ which is a contradiction, 
because ${c_i}\in\{1,\ldots, k\}$ and $c_{i+2}\in\{k+1,\ldots, n\}$.
Therefore, $v_{c_{2p}}\in V_1$ for $p=0,1,2,\ldots, t-1$. 
But since $C$ is of odd length, this forces $V(C)\subseteq V_1$, proving the claim.

Now, if there is a vertex $u\in V(G\setminus C)$ then since $G$ is assumed to be connected, 
there exists a path $P$ from $u$ to a vertex $v$ on $C$
so that $P\cap C = \{v\}$.

If $P$ is of even length, then since every second vertex from $v$ on $P$ must also be in $V_1$, 
we have that $u\in V_1$.

If $P$ is of odd length, consider a neighbor $w$ of $v$, such that $w\in V(C)\subseteq V_1$. 
Then $wvPu$ is a path of even length, and the argument from above forces $u\in V_1$.

Therefore, every vertex of $G$ must be in $V_1$, making $|V_1|=n$ and $|V_2|=0$ which is a contradiction. 
Therefore, we have proven the claim by contrapositive. That is,
if $G$ is nonbipartite and connected, then $A(G)^2$ is not similar
to a block diagonal matrix.
\end{proof}

We now give a result on counting four cycles in a graph.

\begin{theorem}\label{c4s}
If $S=(s_{ij})=A(G)^2$ for some graph $G$, then
\[\frac{1}{4}\sum_{i\neq j}{s_{ij} \choose 2}\]
is the number of distinct cycles of length four in $G$.
\end{theorem}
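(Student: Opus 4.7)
The plan is to show that the sum $\sum_{i\neq j}\binom{s_{ij}}{2}$ counts each $4$-cycle of $G$ exactly four times, by exhibiting a correspondence between the terms contributing to the sum and the pairs (4-cycle, ordered pair of opposite vertices).

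First, I would reinterpret the terms. By Theorem 1.1, $s_{ij}$ counts two-walks from $v_i$ to $v_j$, and each such two-walk is determined by its middle vertex. So for $i\neq j$, $\binom{s_{ij}}{2}$ counts unordered pairs $\{v_k,v_\ell\}$ (with $k\neq\ell$) of middle vertices that both lie on two-walks $v_iv_kv_j$ and $v_iv_\ell v_j$. Because $G$ is simple, no vertex is adjacent to itself, which forces $k,\ell\notin\{i,j\}$; combined with $k\neq \ell$ and $i\neq j$, the four vertices $v_i,v_k,v_j,v_\ell$ are pairwise distinct and the closed walk $v_iv_kv_jv_\ell v_i$ is a genuine $4$-cycle in which $v_i$ and $v_j$ are opposite vertices.

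Next, I would run the correspondence in reverse. Given a $4$-cycle $C=v_av_bv_cv_dv_a$, its two pairs of opposite (non-adjacent) vertices are $\{v_a,v_c\}$ and $\{v_b,v_d\}$. Each such opposite pair, together with the two remaining vertices of $C$, produces exactly one of the unordered pairs of middle vertices described above. So each $4$-cycle gives rise to exactly two contributions of the form (unordered pair of opposite vertices, pair of middle vertices). Passing from the unordered pair $\{v_i,v_j\}$ to the two ordered pairs $(i,j)$ and $(j,i)$ in the sum doubles this count again.

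Putting the two directions together yields a $4$-to-$1$ correspondence between terms contributing $1$ to $\sum_{i\neq j}\binom{s_{ij}}{2}$ and $4$-cycles of $G$, so dividing by $4$ gives the desired count. The part that requires the most care is the bookkeeping in the first step: verifying that distinct middle vertices, together with the distinctness of $i$ and $j$ and the simplicity of $G$, really do force all four vertices of the resulting closed walk to be distinct, so that every contribution genuinely corresponds to a $4$-cycle rather than a degenerate closed walk of length four.
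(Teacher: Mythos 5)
Your proof is correct and follows essentially the same route as the paper's: interpreting $\binom{s_{ij}}{2}$ as the number of $4$-cycles on which $v_i$ and $v_j$ sit opposite, and then observing that each $4$-cycle is counted four times over the ordered pairs of its two opposite vertex pairs. Your explicit verification that simplicity of $G$ forces the four vertices to be distinct is a welcome bit of extra care, but the underlying argument is the same.
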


\begin{proof}
First, we claim that, for $i\neq j$, ${s_{ij} \choose 2}$ counts the number of distinct cycles of 
length four on which vertices $v_i$ and $v_j$ sit opposite.
To prove the claim, let $v_i,v_j\in V(G)$ and notice every two-walk from $v_i$ to $v_j$ 
corresponds to a shared neighbor of the two. Now, a cycle of length
four on which $v_i$ and $v_j$ sit opposite occurs when there is a 
two-walk from $v_i$ to $v_j$ and  a different two-walk from $v_j$ to $v_i$. In other words,
$v_i$ and $v_j$ sit opposite on a cycle of length four when we can choose 
two distinct vertices $u$ and $v$ that are neighbors of both $v_i$ and $v_j$. Since the number
of shared neighbors of $v_i$ and $v_j$ is exactly $s_{ij}$, the number of cycles of 
length four on which $v_i$ and $v_j$ sit opposite is ${s_{ij} \choose 2}$.

Consider a cycle of length four in $G$: $uvwxu$. In the sum $\sum_{i\neq j}{s_{ij} \choose 2}$, 
this cycle is counted once by each of ${s_{uw} \choose 2},{s_{wu} \choose 2},
{s_{vx} \choose 2}$, and ${s_{xv} \choose 2}$. Thus, to count each four cycle in $G$ exactly once, 
we divide this sum by four.
\end{proof}

\begin{remark}
A necessary condition for a matrix $S$ to be square graphic that can be taken from 
Theorem \ref{c4s} is that the number $\sum_{i\neq j}{s_{ij} \choose 2}$ must be divisible by four.
\end{remark}

\section{Row and Column Sums}

We now prove some results relating the row and column sums of the square of the adjacency matrix to
properties of the underlying graph.

\begin{definition}
The \emph{neighborhood of $v$}, $\Gamma(v)$, is the set of all vertices in $G$ adjacent to $v$.
\end{definition}

\begin{theorem}\label{rowsum}
If $S=(s_{ij})=A(G)^2$ for some graph $G$ then
\[\sum_{j=1}^{n} s_{ij} = \sum_{j=1}^{n} s_{ji} = \sum_{v\in\Gamma(v_i)} \deg(v)\]
and thus, if $s_{ii}\neq 0$
\[\frac{1}{s_{ii}}\sum_{j=1}^{n} s_{ij}\]
gives the average degrees of the neighbors of $v_i$.
\end{theorem}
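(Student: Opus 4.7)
The plan is to break the statement into three independent claims and handle each with a short argument. First, that row sums equal column sums; second, that either of these equals $\sum_{v\in\Gamma(v_i)}\deg(v)$; third, that the quotient by $s_{ii}$ gives the average neighbor degree. The main ingredients are the symmetry of $A$ (forced by $G$ being undirected) together with the two-walk interpretation of $S$ established in the first theorem of the paper.

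For the first claim, I would observe that $A=A^\top$, whence $A^2=(A^2)^\top$, so $s_{ij}=s_{ji}$ for every $i,j$. Summing over $j$ immediately gives $\sum_j s_{ij}=\sum_j s_{ji}$. For the second claim, my preferred route is the direct algebraic manipulation: starting from $s_{ij}=\sum_{k=1}^n a_{ik}a_{kj}$, I swap the order of summation to obtain
\[
\sum_{j=1}^n s_{ij}=\sum_{k=1}^n a_{ik}\sum_{j=1}^n a_{kj}=\sum_{k=1}^n a_{ik}\deg(v_k),
\]
and then note that $a_{ik}=1$ precisely when $v_k\in\Gamma(v_i)$, giving $\sum_{v\in\Gamma(v_i)}\deg(v)$. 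As a sanity check I would mention the combinatorial picture using the earlier theorem: $\sum_j s_{ij}$ counts all two-walks starting at $v_i$, and each such walk is specified by choosing a first neighbor $v_k\in\Gamma(v_i)$ and then any of the $\deg(v_k)$ edges leaving $v_k$.

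For the third claim, the key observation is the identification $s_{ii}=\deg(v_i)$: by the two-walk interpretation, $s_{ii}$ counts walks $v_i\to v_k\to v_i$, and each neighbor $v_k\in\Gamma(v_i)$ contributes exactly one such walk. Consequently $s_{ii}\neq 0$ is the same as $v_i$ having at least one neighbor, and
\[
\frac{1}{s_{ii}}\sum_{j=1}^n s_{ij}=\frac{1}{|\Gamma(v_i)|}\sum_{v\in\Gamma(v_i)}\deg(v),
\]
which is precisely the average degree among the neighbors of $v_i$.

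There is no real obstacle here; the entire proof is a short book-keeping argument. The only point that requires a moment of care is noticing that $s_{ii}=\deg(v_i)$, since the theorem statement uses $s_{ii}$ rather than $\deg(v_i)$ in the denominator of the averaging formula, and the interpretation as an average depends on this identification.
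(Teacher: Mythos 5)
Your proof is correct and follows essentially the same approach as the paper: the interchange of summation $\sum_j s_{ij}=\sum_k a_{ik}\deg(v_k)$ is just the algebraic form of the paper's combinatorial grouping of two-walks from $v_i$ by their middle vertex, and the averaging step via $s_{ii}=\deg(v_i)$ is identical. You are in fact slightly more careful than the paper, since you explicitly justify $\sum_j s_{ij}=\sum_j s_{ji}$ from the symmetry of $A$ and the identification $s_{ii}=\deg(v_i)$, both of which the paper takes for granted.
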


\begin{proof}
Consider $v_i\in G$ and some $v\in\Gamma(v_i)$. Then there are exactly $\deg v$ two-walks of the form $v_i v u$. 
Since every two-walk starting at $v_i$ must go through some neighbor of $v_i$, by taking the sum of the degrees of the
neighbors of $v_i$, we will have counted all possible two-walks from $v_i$. On the other hand, 
$\sum_j s_{ij}$ gives the total number of two-walks starting at $v_i$. Thus,
\[\sum_{j=1}^{n} s_{ij} = \sum_{v\in\Gamma(v_i)} \deg(v).\]

The number of summands on the right hand side is exactly $|\Gamma(v_i)|=\deg(v_i)=s_{ii}$. Dividing across gives the desired result.
\end{proof}

\begin{corollary}\label{cor1 rowsum}
If $S=(s_{ij})=A(G)^2$ for some graph $G$ then for each $i$ there is $E_i\subseteq\{s_{11},s_{22},\ldots,s_{nn}\}\setminus\{s_{ii}\}$ 
(viewed as a multiset if necessary) such that $|E_i|=s_{ii}$ and
\[\sum_{s\in E_i}{s}=\sum_{j=1}^{n}{s_{ij}}.\]
\end{corollary}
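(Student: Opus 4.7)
The plan is to use Theorem \ref{rowsum} together with the observation that diagonal entries of $S$ record vertex degrees. Specifically, $s_{kk}$ counts the number of closed two-walks at $v_k$, and since every neighbor of $v_k$ gives exactly one such walk (and nothing else does, in a simple graph), we have $s_{kk}=\deg(v_k)$ for all $k$. This identity is the linchpin — once it is recorded, the corollary becomes essentially a restatement of Theorem \ref{rowsum}.

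Given this, I would define $E_i$ explicitly as the multiset
\[E_i=\{\,s_{kk}\;:\;v_k\in\Gamma(v_i)\,\}.\]
Because $G$ is simple, $v_i\notin\Gamma(v_i)$, so the index $i$ never appears in this list; hence $E_i$ is indeed a sub-multiset of $\{s_{11},\ldots,s_{nn}\}\setminus\{s_{ii}\}$ in the sense the statement requires. The cardinality condition follows immediately from $|\Gamma(v_i)|=\deg(v_i)=s_{ii}$.

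Finally, the sum condition is a direct translation: by the identity $s_{kk}=\deg(v_k)$,
\[\sum_{s\in E_i}s=\sum_{v_k\in\Gamma(v_i)}s_{kk}=\sum_{v\in\Gamma(v_i)}\deg(v),\]
which equals $\sum_{j=1}^n s_{ij}$ by Theorem \ref{rowsum}. Thus $E_i$ satisfies all the required properties.

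There is no real obstacle here — the only thing one must be careful about is the multiset language, since several neighbors of $v_i$ can share the same degree, and the statement explicitly allows $E_i$ to be a multiset to accommodate this. The observation $s_{kk}=\deg(v_k)$ is the entire content of the proof; everything else is bookkeeping.
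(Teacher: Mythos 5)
Your proof is correct and follows essentially the same route as the paper: both define $E_i=\{s_{kk}:v_k\in\Gamma(v_i)\}$ and invoke Theorem \ref{rowsum} together with the identity $s_{kk}=\deg(v_k)$. Your explicit remark that simplicity of $G$ guarantees $v_i\notin\Gamma(v_i)$, so that $s_{ii}$ is genuinely excluded from $E_i$, is a small detail the paper leaves implicit.
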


\begin{proof}
We have
\[\sum_{j=1}^{n} s_{ij} = \sum_{v\in\Gamma(v_i)} \deg(v)\]
and since $|\Gamma(v_i)|=\deg(v_i)=s_{ii}$, the number of summands on the right hand side of this equation is $s_{ii}$. 
For each $v_j\in\Gamma(v_i)$, we have $\deg(v_j)=s_{jj}$.
Taking $E_i=\{s_{jj} $ such that $v_j\in\Gamma(v_i)\}$ gives the desired result.
\end{proof}

\begin{corollary}\label{cor2 rowsum}
If $S=(s_{ij})=A(G)^2$ where $G$ is a $k$-regular graph, then
\[\sum_{j=1}^{n} s_{ij}=\sum_{j=1}^{n} s_{ji} = k^2.\]
\end{corollary}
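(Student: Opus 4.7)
The plan is to derive this as an immediate consequence of Theorem \ref{rowsum}, which already expresses the row sum $\sum_j s_{ij}$ as a sum of degrees of neighbors of $v_i$. Since the hypothesis of $k$-regularity collapses all the relevant quantities to a single value, essentially no new work is required beyond substitution.

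More concretely, I would begin by invoking Theorem \ref{rowsum} to write
\[\sum_{j=1}^{n} s_{ij} \;=\; \sum_{v\in\Gamma(v_i)} \deg(v).\]
Then I would observe that, because $G$ is $k$-regular, each summand $\deg(v)$ on the right-hand side equals $k$. The number of summands is $|\Gamma(v_i)| = \deg(v_i)$, which is again $k$ by $k$-regularity. Multiplying gives $k \cdot k = k^2$. The identity $\sum_j s_{ji} = \sum_j s_{ij}$ follows from the fact that $S = A^2$ is symmetric (being the square of a symmetric matrix), or equally well from the second equality already present in Theorem \ref{rowsum}.

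There is no real obstacle here; the corollary is a direct specialization. The only thing worth being careful about is not claiming anything stronger than regularity gives — for instance, the corollary does not say $s_{ii} = k$ explicitly, though this is of course true and is implicitly used via $|\Gamma(v_i)| = s_{ii} = k$ from Theorem \ref{rowsum}. Since the proof reduces to two substitutions into a previously established identity, I would keep the writeup to a few lines.
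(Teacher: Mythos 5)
Your proposal is correct and matches the paper's proof essentially verbatim: both invoke Theorem \ref{rowsum}, replace each $\deg(v)$ by $k$, and count $|\Gamma(v_i)|=\deg(v_i)=k$ summands to get $k^2$. The remark about the column sum following from symmetry of $S=A^2$ is a harmless addition already covered by the statement of Theorem \ref{rowsum}.
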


\begin{proof}
We have
\[\sum_{j=1}^{n} s_{ij} = \sum_{v\in\Gamma(v_i)} \deg(v) = \sum_{v\in\Gamma(v_i)} k = k^2\]
since $|\Gamma(v_i)|=\deg(v_i)=k$ for all $i$.
\end{proof}

It should be noted that,
the previous results can be used to determine if a matrix $S$ is square graphic as shown 
in the next example.

\begin{example}
Consider the matrix
\[S=\left(\begin{array}{cccc}
2 & 1 & 1 & 0 \\
1 & 2 & 1 & 1 \\
1 & 1 & 1 & 0 \\
0 & 1 & 0 & 1 
\end{array}\right).\]
Then by the previous results, if $S$ were square graphic, then the average degree of the neighbors
of $v_2$ would be
\[\frac{1}{s_{22}}\sum_{i=1}^{4}s_{2i}=\bigg(\frac{1}{2}\bigg)(5)=\frac{5}{2}.\]
This implies that $v_2$ must have a neighbor of degree at least 3, 
which is impossible given the diagonal of $S$.
Therefore, $S$ cannot be square graphic.
\end{example}

\section{Duplication}

Determining when a given matrix was square graphic lead to the interesting problem 
of determining when a matrix represented the square of
the adjacency matrix of several non-isomorphic graphs.
For example, the graphs in Figures \ref{2C3s} and \ref{C6} are non isomorphic
with the same adjacency matrix squared.

\begin{figure}[h!]
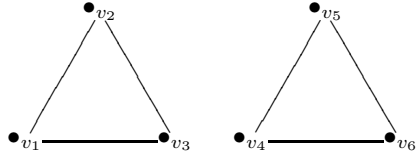

\[\xygraph{
!{<0cm,0cm>;<1cm,0cm>:<0cm,1cm>::}
!{(0,0) }*{\bullet_{v_1}}="v_1"
!{(1,1.73) }*{\bullet_{v_2}}="v_2"
!{(2,0) }*{\bullet_{v_3}}="v_3"
!{(3,0)}*{\bullet_{v_4}}="v_4"
!{(4,1.73)}*{\bullet_{v_5}}="v_5"
!{(5,0)}*{\bullet_{v_6}}="v_6"
"v_1"-"v_2" "v_2"-"v_3" "v_3"-"v_1"
"v_4"-"v_5" "v_5"-"v_6" "v_6"-"v_4"
} \]
\caption[55mm]{\label{2C3s}$C_3\cup C_3$}
\end{figure}

\begin{figure}[h!]
\[\xygraph{
!{<0cm,0cm>;<1cm,0cm>:<0cm,1cm>::}
!{(0,0) }*{\bullet_{v_1}}="v_1"
!{(1,1.73) }*{\bullet_{v_5}}="v_5"
!{(3,1.73) }*{\bullet_{v_3}}="v_3"
!{(4,0)}*{\bullet_{v_4}}="v_4"
!{(3,-1.73)}*{\bullet_{v_2}}="v_2"
!{(1,-1.73)}*{\bullet_{v_6}}="v_6"
"v_1"-"v_5" "v_5"-"v_3" "v_3"-"v_4"
"v_4"-"v_2" "v_2"-"v_6" "v_6"-"v_1"
} \]
\caption[45mm]{\label{C6}$C_6$}
\end{figure}

The following theorem serves as a starting point for the construction of 
squares of adjacency matrices corresponding to several non-isomorphic graphs.

\begin{theorem}\label{bip copy}
We have
\[\left(\begin{array}{cc} 
A(G) & \0 \\ 
\0 & A(G) \end{array}\right)
\sim\left(\begin{array}{cc} 
\0 & A(G) \\ 
A(G) & \0 \end{array}\right)\]
if and only if
$G$ is bipartite.
\end{theorem}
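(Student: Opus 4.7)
The plan is to reinterpret both block matrices as adjacency matrices of graphs on $2n$ vertices, where $n = |V(G)|$: each is a symmetric $(0,1)$-matrix with zero diagonal, so each encodes a simple graph. The block-diagonal matrix is the adjacency matrix of the disjoint union $G \sqcup G$, while the block off-diagonal matrix is the adjacency matrix of a graph $H$ that is bipartite by construction: the first $n$ and the last $n$ coordinates form its two partite sets, and the zero diagonal blocks forbid edges within either. Because conjugation by a permutation matrix is precisely a relabeling of vertices, the asserted similarity holds if and only if $G \sqcup G \cong H$. The converse direction is then immediate: if $G \sqcup G \cong H$, then $G \sqcup G$ inherits bipartiteness from $H$, so its component $G$ is bipartite.

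For the forward direction, I would assume $G$ is bipartite with parts $X$ and $Y$ and construct an explicit isomorphism $\phi \colon G \sqcup G \to H$. Labeling the two copies of $G$ by $\{1,2\}$ and the partite sets of $H$ by $\{0,1\}$, I would define $\phi$ to send a vertex in copy $1$ to side $0$ of $H$ if it lies in $X$ and to side $1$ if it lies in $Y$, with these assignments reversed for copy $2$. Any edge of $G \sqcup G$ sits inside a single copy and, by bipartiteness of $G$, has one endpoint in $X$ and one in $Y$, so $\phi$ routes it across the bipartition of $H$; a symmetric check shows every edge of $H$ arises exactly once as an image.

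I expect the main care needed is indexing bookkeeping in verifying that $\phi$ is a well-defined bijection preserving adjacency in both directions; no deeper obstacle arises. The conceptual heart of the forward direction is the observation that bipartiteness of $G$ is exactly the condition that permits the ``side-swap'' rule to align the block-diagonal and block off-diagonal layouts; the converse correspondingly reflects the fact that an odd cycle in $G$ would preclude any consistent two-coloring of its vertices and obstruct such a $\phi$.
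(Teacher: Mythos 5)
Your proposal is correct and follows essentially the same route as the paper: both identify the block-diagonal matrix with $A(G\sqcup G)$ and the off-diagonal block matrix with the adjacency matrix of the (automatically bipartite) double cover, prove the converse by noting that a nonbipartite $G$ makes $G\sqcup G$ nonbipartite while the double cover is always bipartite so the two graphs cannot be isomorphic, and prove the forward direction by an explicit relabeling built from the two-coloring of $G$. The only cosmetic difference is that the paper realizes that relabeling as a concrete block permutation matrix conjugating $A(G)$ written in its bipartite block form, whereas you phrase the same map as a vertex isomorphism $G\sqcup G\to H$.
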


Before the proof of this theorem is given, we recall the following fact from graph theory.

\begin{lemma}\label{bip adj}
We have $G$ is bipartite if and only if 
$A(G)\sim\left(\begin{array}{cc}
\0 & B^T \\
B & \0\end{array}\right)$.
\end{lemma}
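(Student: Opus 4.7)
The plan is to prove both directions by choosing a convenient labeling of the vertices of $G$, and then to interpret that relabeling as conjugation by a permutation matrix. Recall that if we reorder the vertices $v_1,\ldots,v_n$ by a permutation $\sigma$, the new adjacency matrix is exactly $P^{-1}A(G)P$ for the permutation matrix $P$ associated with $\sigma$; so ``similar via a permutation matrix'' corresponds precisely to ``relabeling of vertices.''

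For the forward direction, assume $G$ is bipartite with partite sets $X$ and $Y$, and let $k=|X|$. After relabeling we may take $X=\{v_1,\ldots,v_k\}$ and $Y=\{v_{k+1},\ldots,v_n\}$. Since edges of $G$ only run between $X$ and $Y$, the entries $a_{ij}$ with $i,j\le k$ or $i,j>k$ are all zero. Writing $B$ for the $(n-k)\times k$ block indexed by $Y\times X$, the symmetry of $A(G)$ forces the upper-right block to be $B^{T}$, giving the desired form after the permutation.

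For the converse, suppose $A(G)$ is similar via a permutation matrix to $\bigl(\begin{smallmatrix}\0 & B^T\\ B & \0\end{smallmatrix}\bigr)$ with the upper-left block of size $k\times k$. Passing to the corresponding vertex relabeling, partition $V(G)$ as $V_1=\{v_1,\ldots,v_k\}$ and $V_2=\{v_{k+1},\ldots,v_n\}$. The zero diagonal blocks say that $a_{ij}=0$ whenever $v_i,v_j$ lie in the same $V_\ell$, i.e.\ $G$ has no edges inside $V_1$ and no edges inside $V_2$. Hence every edge of $G$ runs between $V_1$ and $V_2$, so $G$ is bipartite (the edge-free case, where one part is empty, is trivially bipartite).

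There is no substantial obstacle here; the only point requiring care is the routine verification that renumbering vertices in $G$ corresponds exactly to conjugating $A(G)$ by a permutation matrix, so that the block structure of the adjacency matrix witnesses the partition of the vertex set and vice versa.
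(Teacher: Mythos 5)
Your proof is correct. Note that the paper does not actually prove this lemma --- it is stated as a recalled standard fact from graph theory --- and your argument is exactly the canonical one that is being implicitly invoked: relabeling vertices corresponds to conjugation by a permutation matrix, bipartiteness forces the two diagonal blocks to vanish, and symmetry of $A(G)$ makes the off-diagonal blocks transposes of one another; the converse reads the same block structure backwards. Your handling of the degenerate empty-part case is a reasonable extra precaution.
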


\begin{proof}[Proof of Theorem \ref{bip copy}.]
Suppose $G$ is bipartite. Then 
$A(G)=\left(\begin{array}{cc} 
\0 & B^T \\ 
B & \0 \end{array}\right)$ 
for some $m\times n$ matrix $B$.
Consider the permutation matrix
\[P=\left(\begin{array}{cccc} 
\0 & \0 & I_n & \0 \\ 
\0 & I_m & \0 & \0 \\ 
I_n & \0 & \0 & \0 \\ 
\0 & \0 & \0 & I_m \end{array}\right).\]
Then we have
\[P^{-1}\left(\begin{array}{cccc} 
\0 & \0 & \0 & B^T \\ 
\0 & \0 & B & \0 \\ 
\0 & B^T & \0 & \0 \\ 
B & \0 & \0 & \0 \end{array}\right)P
=
\left(\begin{array}{cccc} 
\0 & B^T & \0 & \0 \\ 
B & \0 & \0 & \0 \\ 
\0 & \0 & \0 & B^T \\ 
\0 & \0 & B & \0 \end{array}\right)\]
and so
\[\left(\begin{array}{cc} 
A(G) & \0 \\ 
\0 & A(G) \end{array}\right)
\sim\left(\begin{array}{cc} 
\0 & A(G) \\ 
A(G) & \0 \end{array}\right).\]

On the other hand, suppose $G$ is nonbipartite. Then
\[\left(\begin{array}{cc} A(G) & \0 \\ \0 & A(G) \end{array}\right)=A(G\cup G)\]
and thus, is the adjacency matrix of a disconnected, nonbipartite graph $H_1$. On the other hand, if $B=A(G)$ then
\[\left(\begin{array}{cc} \0 & A(G) \\ A(G) & \0 \end{array}\right)=\left(\begin{array}{cc} \0 & B^T \\ B & \0 \end{array}\right)\]
and thus, is the adjacency matrix of a bipartite graph $H_2$ by Lemma \ref{bip adj}. 
Therefore, $H_1\not\cong H_2$ and hence, $A(H_1)\not\sim A(H_2)$ by Theorem \ref{similar adj}.
\end{proof}

\begin{remark}\label{bip copy remark}
By the previous theorem, given any nonbipartite graph $G$, the graphs whose adjacency matrices are
\[A(H_1)=\left(\begin{array}{cc} A(G) & \0 \\ \0 & A(G) \end{array}\right) 
\text{ and } 
A(H_2)=\left(\begin{array}{cc} \0 & A(G) \\ A(G) & \0 \end{array}\right)\]
are non-isomorphic graphs with $A(H_1)^2=A(H_2)^2$.

It should be noted that $H_1\cong G\cup G$ and $H_2$ is known as the bipartite double cover graph of $G$ 
or the Kronecker cover of $G$.
\end{remark}

This result can be used to build matrices $S$ with arbitrarily many non-isomorphic graphs whose adjacency matrix squared is $S$.
This process is described in the following theorem.

\begin{theorem}
For every positive integer $k$ and integer $n\ge 3$, 
there exists a matrix $S$ of size $(2kn)\times(2kn)$ such that $A(G_i)^2=S$ for $k+1$ non-isomorphic graphs $G_1,G_2,\ldots, G_{k+1}$.
\end{theorem}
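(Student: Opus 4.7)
The plan is to iterate the duplication trick of Theorem \ref{bip copy} and Remark \ref{bip copy remark} so that a block-diagonal matrix $S$ with $k$ identical ``swappable'' blocks produces $k+1$ distinct graphs. First I would fix a connected, nonbipartite graph $G$ on $n$ vertices; such a graph exists for every $n \ge 3$ (for instance $G = K_n$, or a triangle with $n-3$ further vertices attached along a path). Let $H$ denote the bipartite double (Kronecker) cover of $G$, so that $|V(H)| = 2n$. By Remark \ref{bip copy remark}, $G \cup G$ and $H$ are non-isomorphic but satisfy
\[ A(G \cup G)^2 \;=\; A(H)^2 \;=\; \begin{pmatrix} A(G)^2 & \0 \\ \0 & A(G)^2 \end{pmatrix}; \]
call this common $2n \times 2n$ matrix $T$.

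For each $j = 0, 1, \ldots, k$, I would then define
\[ G_j \;=\; \underbrace{(G \cup G) \cup \cdots \cup (G \cup G)}_{j \text{ copies}} \;\cup\; \underbrace{H \cup \cdots \cup H}_{k-j \text{ copies}}, \]
a graph on $2jn + 2(k-j)n = 2kn$ vertices. Ordering the vertices blockwise, $A(G_j)$ is block-diagonal with $j$ blocks equal to $A(G \cup G)$ and $k-j$ blocks equal to $A(H)$; squaring, every such block becomes $T$, so $S := A(G_j)^2$ is the block-diagonal $(2kn)\times(2kn)$ matrix with $k$ copies of $T$ on the diagonal, independent of $j$. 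This gives the desired common matrix $S$.

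What remains is to show that $G_0, G_1, \ldots, G_k$ are pairwise non-isomorphic. Here I would use a component-count argument: since $G$ is connected, each $G$ summand contributes a connected component of order $n$; since $G$ is connected and nonbipartite, its Kronecker cover $H$ is itself connected (standard fact), so each $H$ summand contributes a connected component of order $2n$. Thus $G_j$ has exactly $2j$ connected components of order $n$ and exactly $k-j$ of order $2n$. Because $n \neq 2n$, the multiset of component orders determines $j$ uniquely, so distinct $j$ give non-isomorphic graphs.

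The main obstacle, such as it is, lies in the choice of $G$: we need $G$ connected \emph{and} nonbipartite so that the component-size invariant cleanly separates the $G_j$, and this is precisely where the hypothesis $n \ge 3$ is used (to guarantee the existence of an odd cycle). Everything else is a routine iteration of Theorem \ref{bip copy} block by block.
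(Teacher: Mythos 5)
Your proposal is correct and follows essentially the same route as the paper: fix a nonbipartite $G$ on $n$ vertices, let $S$ be the block-diagonal matrix built from copies of $A(G\cup G)^2=A(H)^2$ where $H$ is the Kronecker cover, and obtain the $k+1$ graphs by swapping pairs of copies of $G$ for copies of $H$. Your added hypothesis that $G$ be connected, together with the component-order count (using that the Kronecker cover of a connected nonbipartite graph is connected), makes the non-isomorphism step more explicit than the paper's, which simply asserts $G_i\not\cong G_j$ from nonbipartiteness.
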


\begin{proof}
Let $G$ be a nonbipartite graph on $n$ vertices. Note, $n\ge 3$ since we must have an odd cycle 
in $G$ the smallest of which is length 3.
Let $A$ be the block diagonal matrix with $2k$ copies of $A(G)$ on the main block-diagonal.
That is,
\[A=\left(\begin{array}{cccc} 
A(G) & \0 & \cdots & \0 \\
\0 & A(G) & &\vdots \\
\vdots & & \ddots & \0 \\
\0 & \cdots & \0 & A(G)
\end{array}\right).\]
If we define $S=A^2$, then $S$ is square graphic since $S=A(\bigcup_{i=1}^{2k}{G})^2$.

Let $H$ be the bipartite double cover graph of $G$; that is, the graph $H$ such that 
\[A(H)=\left(\begin{array}{cc} \0 & A(G) \\ A(G) & \0 \end{array}\right)\]
and define the permutation $\pi_{2t}=(12)(34)\cdots(2(t-1) \ 2t)$  for each $t=1,2,\ldots,k$.
For each permutation, let $P_{\pi_{2t}}$ be the block permutation matrix of size $(2kn)\times (2kn)$ 
swapping $n$ rows of $I_{2kn}$ at a time according to the permutation $\pi_{2t}$.

For example,
\[P_{\pi_2}=
\left(\begin{array}{ccccc}
\0 & I_n & \0 & \cdots & \0 \\
I_n & \0 & \0 & & \vdots \\
\0 & \0 & I_n & & \vdots\\
\vdots & & & \ddots & \0 \\
\0 & \cdots & \cdots & \0 & I_n
\end{array}\right).\]

Then, for every $t=1,2,\ldots,k$ we have
\[P_{\pi_{2t}}A=
\left(\begin{array}{ccccccc} 
A(H)   & \0   &\cdots& \0   &  \0  &\cdots& \0   \\
\0     & A(H) &\ddots&\vdots&\vdots&      &\vdots\\
\vdots &\ddots&\ddots& \0   &\vdots&      &\vdots\\
\0     &\cdots& \0   & A(H) & \0   &\cdots& \0   \\
\0     &\cdots&\cdots& \0   & A(G) &\ddots&\vdots\\
\vdots &      &      &\vdots&\ddots&\ddots& \0   \\
\0     &\cdots&\cdots& \0   &\cdots& \0   & A(G)
\end{array}\right)\]
where there are $t$ copies of $A(H)$ and $2(k-t)$ copies of $A(G)$ on the main block diagonal.
Next, define the graphs $G_t$ by
\[A(G_t)=P_{\pi_{2t}}A=A\big((\bigcup_{i=1}^{t}H)\cup(\bigcup_{j=1}^{2(k-t)}G)\big).\]

Since $G$ is nonbipartite, we have $G_i\not\cong G_j$ for $i\neq j$; 
however, $A(G_t)^2=S$ for all $t=1,2,\ldots, k$ by Theorem \ref{bip copy} and Remark \ref{bip copy remark}.

Therefore, $S$ is $(2kn)\times(2kn)$ and the square of the adjacency matrix for the $k+1$ non-isomorphic graphs:
$G_1, \ldots, G_{k-1},G_k$ and $\bigcup_{i=1}^{2k}{G}$.
\end{proof}

\begin{remark}
There are nonbipartite, connected, non-isomorphic graphs whose adjacency matrices squared are similar.
\end{remark}

\begin{example}
The graphs $G$ and $H$ from Figures \ref{G} and \ref{H}, respectively, are nonbipartite, connected, non-isomorphic graphs
whose adjacency matrices squared are similar. Note that in each graph, the vertices labeled $v_1$ are identified; and so, $G$
and $H$ are both $4$-regular. These graphs were found in \cite{cvetkovic:spectra}.

\begin{figure}[h!]
\[\xygraph{
!{<0cm,0cm>;<1cm,0cm>:<0cm,1cm>::}
!{(0,0) }*{\bullet_{v_1}}="v1"
!{(.707,.707) }*{\bullet_{v_2}}="v2"
!{(.707,-.707) }*{\bullet_{v_3}}="v3"
!{(1.414,0)}*{\bullet_{v_4}}="v4"
!{(2.121,.707)}*{\bullet_{v_5}}="v5"
!{(2.121,-.707)}*{\bullet_{v_6}}="v6"
!{(2.828,0)}*{\bullet_{v_7}}="v7"
!{(3.536,.707)}*{\bullet_{v_8}}="v8"
!{(3.536,-.707)}*{\bullet_{v_9}}="v9"
!{(4.243,0)}*{\bullet_{v_{10}}}="v10"
!{(4.950,.707)}*{\bullet_{v_{11}}}="v11"
!{(4.950,-.707)}*{\bullet_{v_{12}}}="v12"
!{(5.657,0) }*{\bullet_{v_1}}="v1a"
"v1"-"v2" "v1"-"v3" "v2"-"v4" "v3"-"v4" "v4"-"v5" "v4"-"v6" "v2"-"v5" "v3"-"v6"
"v5"-"v7" "v5"-"v8" "v6"-"v7" "v7"-"v8" "v7"-"v9" "v6"-"v9" "v8"-"v10" "v10"-"v11" "v10"-"v12"
"v8"-"v11" "v9"-"v10" "v9"-"v12" "v11"-"v1a" "v12"-"v1a" "v2"-@/^.5cm/"v11" "v3"-@/_.5cm/"v12"
} \]
\caption[60mm]{\label{G} Graph $G$}
\end{figure}

\begin{figure}[h!]
\[\xygraph{
!{<0cm,0cm>;<1cm,0cm>:<0cm,1cm>::}
!{(0,0) }*{\bullet_{v_1}}="v1"
!{(.707,.707) }*{\bullet_{v_2}}="v2"
!{(.707,-.707) }*{\bullet_{v_3}}="v3"
!{(1.414,0)}*{\bullet_{v_4}}="v4"
!{(2.121,.707)}*{\bullet_{v_5}}="v5"
!{(2.121,-.707)}*{\bullet_{v_6}}="v6"
!{(2.828,0)}*{\bullet_{v_7}}="v7"
!{(3.536,.707)}*{\bullet_{v_8}}="v8"
!{(3.536,-.707)}*{\bullet_{v_9}}="v9"
!{(4.243,0)}*{\bullet_{v_{10}}}="v10"
!{(4.950,.707)}*{\bullet_{v_{11}}}="v11"
!{(4.950,-.707)}*{\bullet_{v_{12}}}="v12"
!{(5.657,0) }*{\bullet_{v_1}}="v1a"
"v1"-"v2" "v1"-"v3" "v2"-"v4" "v3"-"v4" "v4"-"v5" "v4"-"v6"
"v5"-"v7" "v6"-"v7" "v7"-"v8" "v7"-"v9" "v8"-"v10" "v10"-"v11" "v10"-"v12"
"v9"-"v10" "v11"-"v1a" "v12"-"v1a" "v2"-@/^.5cm/"v8" "v3"-@/_.5cm/"v9"
"v5"-@/^.5cm/"v11" "v6"-@/_.5cm/"v12"
"v2"-"v3" "v5"-"v6" "v8"-"v9" "v11"-"v12"
} \]
\caption[60mm]{\label{H} Graph $H$}
\end{figure}
\end{example}

With this example, it appears to the author that the problem of duplication is more complicated than
initially suspected and will require further study.

\bibliographystyle{amsplain}
\bibliography{pubbib}

\end{document}